\newtheorem{theorem}{Theorem}
\newtheorem{lemma}{Lemma}
\DeclareMathOperator {\gp} {gp}
\let\deg\relax
\DeclareMathOperator {\deg} {deg}
\def\cp{\,\square\,}
\title{The general position number of the Cartesian product of two trees}
\author{Jing Tian$\/^{a}$, Kexiang Xu$\/^{a}$, Sandi Klav\v{z}ar$^{b, c, d}$ \\\\
 $^{a}$ \small  College of Science, Nanjing University of
 Aeronautics \& Astronautics,\\
 \small Nanjing, Jiangsu 210016, PR China\\
$^{b}$ \small Faculty of Mathematics and Physics, University of Ljubljana, Slovenia\\
$^{c}$ \small Faculty of Natural Sciences and Mathematics, University of Maribor, Slovenia\\
$^{d}$ \small Institute of Mathematics, Physics and Mechanics, Ljubljana, Slovenia \\
\small {\tt jingtian526@126.com} (J.\ Tian)\\
\small {\tt kexxu1221@126.com} (K.\ Xu)\\
\small {\tt sandi.klavzar@fmf.uni-lj.si} (S.\ Klav\v{z}ar)
}
\date{}
\begin{document}

\maketitle

\begin{abstract}
 The general position number of a connected graph is the cardinality of a largest set of vertices such that no three  pairwise-distinct vertices from the set lie on a common shortest path. In this paper it is proved that the general position number is additive on the Cartesian product of two trees.
\end{abstract}

\noindent
\textbf{Keywords:} general position set; general position number; Cartesian product; trees

\medskip\noindent
\textbf{AMS Math.\ Subj.\ Class.\ (2020)}: 05C05, 05C12, 05C35

% \setlength{\baselineskip}{18pt}

%%%%%%%%%%%%%%%%%%%%%%%%%%%%%%%%%%%%%%%%%%
\section{Introduction}
\label{sec:intro}
%%%%%%%%%%%%%%%%%%%%%%%%%%%%%%%%%%%%%%%%%%

Let $d_G(x,y)$ denote, as usual, the number of edges on a shortest $x,y$-path in $G$. A set $S$ of vertices of a connected graph $G$ is a {\em general position set} if $d_G(x,y) \ne d_G(x,z) + d_G(z,y)$ holds for every $\{x,y,z\}\in \binom{S}{3}$.  The {\em general position number} $\gp(G)$ of $G$ is the cardinality of a largest general position set in $G$. Such a set is briefly called a {\em gp-set} of $G$.

Before the general position number was introduced in~\cite{PM}, an equivalent concept was proposed in~\cite{ullas-2016}. Much earlier, however, the general position problem has been studied  by K\"orner~\cite{korner-1995} in the special case of hypercubes. Following~\cite{PM}, the graph theory general position problem has been investigated in~\cite{BSA, MGS, Sk, SKB, PMS, patkos-2019+, JT}.

The {\em Cartesian product} $G\cp H$ of vertex-disjoint graphs $G$ and $H$ is the graph with vertex set $V(G) \times V(H)$, vertices $(g,h)$ and $(g',h')$ being adjacent if either $g=g'$ and $hh'\in E(H)$, or $h=h'$ and $gg'\in E(G)$. In this paper we are interested in $\gp(G\cp H)$, a problem earlier studied in~\cite{MGS, SKB, PMS, JT}. More precisely, we are interested in Cartesian products of two (finite) trees. (For some of the other investigations of the Cartesian product of trees see~\cite{balak-2016, shiu-2018, wood-2011}.) An important reason for this interest is the fact that the general position number of products of paths is far from being trivial. First, denoting with $P_\infty$ the two-way infinite path, one of the main results from~\cite{PMS} asserts that $\gp(P_\infty \cp P_\infty) = 4$. Denoting further with $G^n$ the $n$-fold Cartesian product of $G$, it was demonstrated in the same paper that $10\le \gp(P_\infty^3) \le 16$. The lower bound $10$ was improved to $14$ in~\cite{SKB}. Very recently, these results were superseded in~\cite{KR} by proving that if $n$ is an arbitrary positive integer,  then $\gp(P_\infty^n) = 2^{2^{n-1}}$. Denoting with $n(G)$ the order of a graph $G$, in this paper we prove:

\begin{theorem}
\label{thm:main}
If $T$ and $T^{*}$ are trees with $\min\{n(T),n(T^*)\}\geq 3$, then
$$\gp(T\cp T^{*}) = \gp(T) + \gp(T^{*})\,.$$
\end{theorem}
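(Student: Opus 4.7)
My plan is to prove the equality by the two usual inequalities. I use two facts throughout. First, for every tree $T$ with $n(T) \geq 2$, $\gp(T) = |L(T)|$, where $L(T)$ denotes the leaf set: leaves form a general position set since no leaf can lie strictly interior to a shortest path, and conversely any vertex of a gp-set $S \subseteq V(T)$ must be a leaf of the Steiner hull of $S$ in $T$ (otherwise it lies on a shortest path between two elements of $S$ on different sides), with these leaves injecting into $L(T)$ along external branches. Second, since shortest paths in trees are unique, $(u,v)$ lies on a shortest $(u_1,v_1)$-$(u_2,v_2)$ path in $T \cp T^*$ if and only if $u$ lies on the $u_1$-$u_2$ path in $T$ \emph{and} $v$ lies on the $v_1$-$v_2$ path in $T^*$.

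For the lower bound $\gp(T \cp T^*) \geq \gp(T) + \gp(T^*)$, using $n(T), n(T^*) \geq 3$, I pick interior vertices $u^* \in V(T) \setminus L(T)$ and $v^* \in V(T^*) \setminus L(T^*)$ and consider
\[
S = \bigl(L(T) \times \{v^*\}\bigr) \cup \bigl(\{u^*\} \times L(T^*)\bigr).
\]
The two parts are disjoint by the choice of $u^*, v^*$, so $|S| = |L(T)| + |L(T^*)|$. The gp-property follows by a direct distance computation: triples within a single part reduce to the gp-property of leaves in $T$ or $T^*$; the only mixed triples are of the form $(u_i, v^*), (u_j, v^*), (u^*, v_k)$ with $u_i, u_j \in L(T)$ and $v_k \in L(T^*)$, and the only candidate triangle equality reduces to $d_T(u_i,u_j) = d_T(u_i,u^*) + d_T(u_j,u^*) + 2 d_{T^*}(v^*, v_k)$, impossible since the right-hand side strictly exceeds $d_T(u_i,u_j)$ (using $v^* \neq v_k$).

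For the upper bound $\gp(T \cp T^*) \leq \gp(T) + \gp(T^*)$, which is the substantive content, let $S$ be any gp-set and write $S_v = \{u \in V(T) : (u,v) \in S\}$ and $S^u = \{v \in V(T^*) : (u,v) \in S\}$. Since horizontal and vertical fibers in $T \cp T^*$ are isometric to $T$ and $T^*$, each $S_v$ is a gp-set in $T$ (so $|S_v| \leq |L(T)|$) and each $S^u$ is a gp-set in $T^*$ (so $|S^u| \leq |L(T^*)|$). The plan is to establish the following structural ``cross decomposition'' for $S$: there exist $u^* \in V(T)$ and $v^* \in V(T^*)$ such that every element of $S$ has first coordinate $u^*$ or second coordinate $v^*$. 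Granted this, the row $V(T) \times \{v^*\}$ and the column $\{u^*\} \times V(T^*)$ cover $S$, contribute at most $|L(T)|$ and $|L(T^*)|$ elements respectively, and overlap in the single vertex $(u^*, v^*)$, yielding $|S| \leq |L(T)| + |L(T^*)|$.

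The main obstacle is that the cross decomposition is literally false for some small gp-sets: for instance, $\{(\ell_1,m_1), (\ell_2,m_2), (\ell_3,m_3)\}$ with pairwise distinct $\ell_i \in L(T)$ and $m_i \in L(T^*)$ is a gp-set of size $3$ not contained in any cross. The correct statement to establish is therefore a dichotomy: either $S$ admits a cross decomposition, or $S$ contains such a ``$3$-matching'' (three elements with pairwise distinct $T$- and $T^*$-coordinates), in which case one shows that any attempt to augment the matching with a fourth element of $S$ forces a triangle equality via tree geometry, keeping $|S|$ well below the target bound. A natural implementation is induction on $n(T) + n(T^*)$: peel off a leaf $\ell$ of $T$, apply induction to the gp-set $S \setminus (\{\ell\} \times V(T^*))$ in $(T-\ell) \cp T^*$, and carefully track how many elements of $S$ live in the fiber $\{\ell\} \times V(T^*)$ depending on whether the unique neighbor of $\ell$ in $T$ remains interior after deletion; the matching-exclusion step above is what closes the inductive gap in the non-crosslike case.
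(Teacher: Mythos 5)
Your lower bound is exactly the paper's construction (Lemma~\ref{lower-gp}) and is essentially correct, though for the mixed triple you should check all three collinearity candidates, not only the one you display. The upper bound, which you correctly identify as the substantive part, has two genuine gaps. First, the dichotomy ``cross decomposition or $3$-matching'' is incomplete: by K\H{o}nig's theorem, the absence of a $3$-matching only yields a cover of $S$ by \emph{two lines}, and these may be two $T$-layers or two $T^*$-layers rather than one of each. A gp-set contained in two $T$-layers is a priori bounded only by $2\ell(T)$, which can exceed $\ell(T)+\ell(T^*)$, so this case requires its own argument and none is given. Second, and more seriously, the $3$-matching branch is where the entire difficulty of the theorem lives, and the heuristic you offer for it is false. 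In $P_4\cp P_4$ the set $\{(1,2),(2,4),(3,1),(4,3)\}$ is a general position set of size $4=\ell(P_4)+\ell(P_4)$ whose elements have pairwise distinct coordinates in both factors; one checks via Lemma~\ref{geo-interval} that no point lies in the interval of two others. So a gp-set containing a $3$-matching (indeed a $4$-matching) can attain the target bound exactly and cannot be dismissed as staying ``well below'' it; whatever you prove there must deliver the sharp bound, which is essentially the whole theorem. The paper's proof spends its hardest cases (Subcases 1.1.2 and 2.1.2, and Case 2.2) on precisely such spread-out configurations, combining the product interval structure of Lemma~\ref{geo-interval} with a careful bookkeeping of branching paths.

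The leaf-peeling induction also does not close as sketched. Restricting $S$ to $(T-\ell)\cp T^*$ is legitimate since the subproduct is convex, but if the neighbour of the deleted leaf $\ell$ has degree $2$ in $T$, then $\ell(T-\ell)=\ell(T)$ and induction already gives $|S\setminus(\{\ell\}\times V(T^*))|\le \ell(T)+\ell(T^*)$, so you would have to show the fiber $\{\ell\}\times V(T^*)$ contains \emph{no} point of $S$; that is false in general, since this fiber is a convex copy of $T^*$ and may carry up to $\ell(T^*)$ points of $S$. Even when $\ell(T-\ell)=\ell(T)-1$ you must cap the fiber's contribution at one point, and no mechanism for that is provided. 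The paper avoids induction entirely: it fixes a layer meeting the putative large gp-set $R$ in the maximum number of points, distinguishes whether some layer is full (meets $R$ in $\gp$ of the factor) or not, and in each case produces three collinear points directly. If you want to salvage your outline, the piece that must be built from scratch is a quantitative bound on gp-sets spread over many layers in both factors; that is not a routine tree-geometry step.
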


\noindent
Theorem~\ref{thm:main} widely extends the above mentioned result $\gp(P_\infty \cp P_\infty) = 4$. Further,  the equality $\gp(P_\infty^n) = 2^{2^{n-1}}$ shows that Theorem~\ref{thm:main} has no obvious (inductive) extension to Cartesian products of more than two trees. Hence, to determine the general position number of such products remains a challenging problem.

In the next section we give further definitions, recall known results needed, and prove several auxiliary new results. Then, in Section~\ref{sec:proof}, we prove Theorem~\ref{thm:main}.

%%%%%%%%%%%%%%%%%%%%%%%%%%%%%%%%%%%%%%%%%%
\section{Preliminaries}
\label{sec:prelim}
%%%%%%%%%%%%%%%%%%%%%%%%%%%%%%%%%%%%%%%%%%

 Let $T$ be a tree. The set of leaves of $T$ will be denoted by $L(T)$, and let $\ell(T) = |L(T)|$. If $u$ and $v$ are vertices of $T$ with $\deg(u) \ge 2$ and $\deg(v) = 1$, then the unique $u,v$-path is a \textit{branching path} of $T$. If $u$ is not a leaf of $T$, then there are exactly $\ell(T)$ branching paths starting from $u$; we say that the $u$ is the  \textit{root} of these branching paths and that the degree $1$ vertex of a branching path $P$ is the \textit{leaf of} $P$.

\begin{lemma}\label{tree}{\rm(\cite{PM})}
If $T$ is a tree, then $\gp(T)=\ell(T)$.
\end{lemma}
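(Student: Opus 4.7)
The plan is to prove the two inequalities $\gp(T) \geq \ell(T)$ and $\gp(T) \leq \ell(T)$ separately. For the lower bound, I would argue that $L(T)$ itself is a general position set: any vertex strictly inside a shortest path must have $T$-degree at least $2$, whereas a leaf has degree $1$, so no leaf can lie strictly between two other leaves on a shortest path. This yields $\gp(T) \geq \ell(T)$.

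For the upper bound, given any general position set $S$ of $T$, the plan is to construct an injection $\phi \colon S \to L(T)$, which will force $|S| \leq \ell(T)$. If $v \in S$ is a leaf, set $\phi(v) = v$. If $v \in S$ is internal, the key observation is that at most one connected component of $T-v$ contains a vertex of $S \setminus \{v\}$: otherwise $v$ would lie on the unique shortest path between two such $S$-vertices, violating the general position condition on a triple in $S$. Since $\deg_T(v) \geq 2$, one may therefore choose a component $B_v$ of $T-v$ disjoint from $S$, and let $\phi(v)$ be any leaf of $T$ lying in $B_v$; such a leaf exists because every component of $T-v$ contains at least one leaf of $T$.

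The main obstacle is the verification that $\phi$ is injective. Suppose $\phi(v) = \phi(w) = \ell$ with $v \neq w$. If $v \in L(T)$, then $v = \ell$ is an $S$-vertex lying in $B_w$, contradicting $B_w \cap S = \emptyset$. If both $v$ and $w$ are internal, then since $\ell \in B_v$ and $w \notin B_v$, the unique $w,\ell$-path must pass through $v$; symmetrically, the unique $v,\ell$-path must pass through $w$. This gives $d_T(v,\ell) = d_T(v,w) + d_T(w,\ell)$ and $d_T(w,\ell) = d_T(w,v) + d_T(v,\ell)$, whose sum forces $d_T(v,w) = 0$, i.e.\ $v = w$, a contradiction. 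Hence $\phi$ is injective and $|S| = |\phi(S)| \leq |L(T)| = \ell(T)$, completing the proof.
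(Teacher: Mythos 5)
Your proof is correct. Note that the paper itself gives no proof of this lemma --- it is quoted from the reference [PM] (Manuel and Klav\v{z}ar) --- so there is no in-paper argument to compare against. Both halves of your argument are sound: for the lower bound, a vertex interior to a geodesic has degree at least $2$, so the leaf set is in general position; for the upper bound, your injection $\phi\colon S\to L(T)$ works because (i) for an internal $v\in S$ at most one component of $T-v$ meets $S\setminus\{v\}$ (else $v$ would be a cut vertex between two members of $S$ and the triple would violate general position), so a component $B_v$ avoiding $S$ exists, and (ii) every component of $T-v$ contains a leaf of $T$. The injectivity step is the only delicate point and you handle it cleanly: if $\phi(v)=\phi(w)=\ell$ for distinct internal $v,w$, then $v$ separates $\ell$ from $w$ and $w$ separates $\ell$ from $v$, and summing the two resulting distance identities forces $d_T(v,w)=0$. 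This is a self-contained and slightly different packaging from the usual argument in [PM] (which also exhibits $L(T)$ as a gp-set and then bounds a general position set by distributing its vertices among branches), but the underlying combinatorial fact --- an internal vertex of a general position set ``uses up'' a branch containing no other member of the set --- is the same; your injection just makes the counting explicit.
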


We next describe which vertices of a tree lie in some gp-set of the tree.

\begin{lemma}\label{T-non-leaf}
A non-leaf vertex $u$ in a tree $T$ belongs to a gp-set of $T$ if and only if $T-u$ has exactly two components and at least one of them is a path.
\end{lemma}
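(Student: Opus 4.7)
My plan is to address each direction of the equivalence in turn, using Lemma~\ref{tree} and the observation that removing a non-leaf $u$ of $T$ yields exactly $\deg_T(u)$ components.

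For necessity, suppose $u$ lies in a gp-set $S$ of $T$ with $|S| = \ell(T)$. Let $T_1, \ldots, T_d$ be the components of $T - u$, where $d = \deg_T(u) \geq 2$, and let $u_i$ denote the neighbor of $u$ in $T_i$. The key observation is that for any $p \in V(T_i)$ and $q \in V(T_j)$ with $i \neq j$, the vertex $u$ lies on the unique $p$-$q$ path of $T$, so $\{p, u, q\}$ would be collinear. Hence $S \setminus \{u\}$ is confined to a single component, say $T_1$. Noting that $T[V(T_1) \cup \{u\}]$ is a subtree of $T$ and convex, $S$ is a gp-set of this subtree as well, so Lemma~\ref{tree} gives $|S| \leq 1 + k_1$, where $k_i = |L(T) \cap V(T_i)|$. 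Since $|S| = \ell(T) = \sum_i k_i$, this yields $\sum_{i \geq 2} k_i \leq 1$. On the other hand, each $T_i$ contains at least one leaf of $T$: if $T_i$ is a single vertex it is $\{u_i\}$ with $u_i$ a leaf of $T$, and if $|V(T_i)| \geq 2$ then $T_i$ has at least two leaves as a tree, of which only $u_i$ could fail to be a leaf of $T$. This forces $d = 2$ and $k_2 = 1$. A short case analysis then shows that a component $T_2$ with exactly one leaf of $T$ must be a path with $u_2$ as an endpoint.

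For sufficiency, assume $T - u$ has exactly two components $T_1, T_2$ with $T_2$ a path having $u_2$ as an endpoint. Let $x$ be the other endpoint of $T_2$ (so $x = u_2$ if $T_2$ is a single vertex); then $x$ is the unique leaf of $T$ contained in $V(T_2)$. I would take $S = (L(T) \setminus \{x\}) \cup \{u\}$, which has cardinality $\ell(T) = \gp(T)$. To verify that $S$ is in general position, I would check any triple drawn from $S$: three leaves of $T$ cannot be collinear, since a leaf has degree $1$ and cannot be interior to any path; and for a triple containing $u$ together with two leaves, both leaves lie in $V(T_1)$, so their shortest path stays inside $T_1$ and avoids $u$, while neither leaf can be interior to a path through $u$ by the same leaf-degree argument.

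The main obstacle, in my view, is the final step of the necessity direction: deducing from $k_2 = 1$ that $T_2$ is a path with $u_2$ as an endpoint. If $u_2$ were not a leaf of $T_2$, every leaf of $T_2$ would have degree $1$ in $T$, forcing $k_2 \geq 2$; and if $T_2$ had three or more leaves as a tree, at least two would differ from $u_2$, again forcing $k_2 \geq 2$. So $T_2$ has at most two leaves as a tree (hence is a path) with $u_2$ as one of them.
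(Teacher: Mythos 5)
Your proof is correct, and your necessity argument takes a genuinely different route from the paper's. The paper argues by exchange: if $T-u$ had three or more components, or exactly two components neither of which is a path, the gp-set $R\ni u$ would have to miss whole components (or at least two leaves of a missed non-path component), and replacing $u$ by two suitably chosen vertices would yield a strictly larger general position set. You instead count: after confining $S\setminus\{u\}$ to one component $T_1$, you bound $|S|$ through the convex subtree $T[V(T_1)\cup\{u\}]$ and Lemma~\ref{tree}, obtaining $|S|\le k_1+1$, and compare with $\ell(T)=\sum_i k_i$. (Minor slip: $S$ need only be a \emph{general position set} of that subtree, not a gp-set of it, to get $|S|\le \ell\big(T[V(T_1)\cup\{u\}]\big)=k_1+1$; the inequality you use is the right one.) Your route buys a sharper conclusion, namely that the path component meets $u$ at an endpoint, and your sufficiency part, with the explicit set $(L(T)\setminus\{x\})\cup\{u\}$ and the triple-by-triple verification, is far more complete than the paper's one-sentence converse (``$u$ is a non-leaf vertex on a pendant path, hence belongs to a gp-set'').

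One point to flag: your ``if'' direction assumes more than the lemma literally states, namely that the neighbour $u_2$ of $u$ in the path component $T_2$ is an \emph{endpoint} of that path. This strengthening is in fact unavoidable, because the lemma as written is false in the ``if'' direction. Take $T$ with edges $uq$, $ub$, $qp$, $qr$, $ba$, $bc$: then $T-u$ has exactly two components, the paths $p$--$q$--$r$ and $a$--$b$--$c$, yet every general position set containing $u$ is contained in $\{u,p,q,r\}$ or in $\{u,a,b,c\}$, each inducing a star, and so has at most $3<4=\gp(T)$ elements. The paper's converse silently makes the same strengthening by asserting that $u$ lies on a pendant path, and the corrected equivalence ($u$ is an internal vertex of a pendant path) is exactly what your necessity argument delivers and what the paper later uses. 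So your proof establishes the right statement; it just does not match the lemma verbatim.
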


\begin{proof}
First, let $R$ be a gp-set of $T$ containing the non-leaf vertex $u$. Suppose that $T-u$ has at least three components, say $T_1,T_2$ and $T_3$. Since $R$ is a gp-set containing $u$, $R$ intersects with at most one of $T_1$, $T_2$ and $T_3$. Assume without loss of generality that $R\cap V(T_2)=\emptyset$ and $R\cap V(T_3)=\emptyset$. Choose vertices $v$ and $w$ in $T$ such that $v\in V(T_2)$ and $w\in V(T_3)$. Then  $(R-\{u\})\cup\{v,w\}$ is a larger gp-set than $R$ in $T$, a contradiction. Hence $T-u$ has exactly two components, say $T_1$ and $T_2$. Now suppose that neither  $T_1$ nor $T_2$ is a path. Then as above, we have  $R\cap V(T_1)=\emptyset$ or $R\cap V(T_2)=\emptyset$. By symmetry, we assume that $R\cap V(T_2)=\emptyset$. Since $T_2$ is not a path, there are at least two leaves $x_1$ and $x_2$ in $T_2$.
Then the set $(R-\{u\})\cup\{x_1,x_2\}$ is a larger gp-set than $R$, again, in $T$. Therefore, at least one of $T_1$ and $T_2$ is a path.

Conversely, we observe that $u$ is a non-leaf vertex on a pendant path in $T$. Then $u$ belongs to a gp-set in $T$.
\end{proof}

In $G\cp H$, if $h\in V(H)$, then the subgraph of $G\cp H$ induced by the vertices $(g,h)$, $g\in V(G)$, is a {\em $G$-layer}, denoted with  $G^h$. Analogously $H$-layers $\prescript{g}{}H$ are defined. $G$-layers and $H$-layers are isomorphic to $G$ and to $H$, respectively. The distance function in Cartesian products is additive, that is, if $(g_{1},h_{1}), (g_{2},h_{2})\in V(G\cp H)$, then
\begin{equation}
\label{eq1}
d_{G\cp H}((g_{1},h_{1}), (g_{2},h_{2})) = d_{G}(g_{1},g_{2})+d_{H}(h_{1},h_{2}).
\end{equation}
If $u,v\in V(G)$, then the {\em interval} $I_G(u,v)$ between $u$ and $v$ in $G$ is the set of all vertices lying on shortest $u,v$-paths, that is,
$$I_G(u,v) = \{w:\ d_G(u,v) = d_G(u,w) + d_G(w,u)\}\,.$$
In what follows, the notations $d_{G}(u,v)$ and $I_G(u,v)$ may be simplified to $d(u,v)$ and $I(u,v)$ if $G$ will be  clear from the context.
Equality~\eqref{eq1} implies that intervals in Cartesian products have the following nice structure, cf.~\cite[Proposition 12.4]{WI}.

\begin{lemma}\label{geo-interval}
If $G$ and $H$ are connected graphs and $(g_{1},h_{1}), (g_{2},h_{2})\in V(G\cp H)$, then
$$I_{G\cp H}((g_{1},h_{1}), (g_{2},h_{2})) = I_{G}(g_{1},g_{2}) \times I_{H}(h_{1},h_{2})\,.$$
\end{lemma}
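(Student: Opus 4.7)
The proof plan is to combine the distance formula~\eqref{eq1} with the triangle inequality applied in each factor separately. I would show both inclusions simultaneously by a single chain of equivalences.

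Concretely, I would start by picking an arbitrary vertex $(w,z) \in V(G\cp H)$ and writing out what it means for $(w,z)$ to lie in $I_{G\cp H}((g_1,h_1),(g_2,h_2))$. By definition this says
\[
 d_{G\cp H}((g_1,h_1),(g_2,h_2)) = d_{G\cp H}((g_1,h_1),(w,z)) + d_{G\cp H}((w,z),(g_2,h_2)).
\]
I would then apply~\eqref{eq1} to each of the three distances, so the equality becomes
\[
 d_G(g_1,g_2) + d_H(h_1,h_2) = \bigl[d_G(g_1,w)+d_G(w,g_2)\bigr] + \bigl[d_H(h_1,z)+d_H(z,h_2)\bigr].
\]

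The crux is then the triangle inequality in each factor: $d_G(g_1,w)+d_G(w,g_2)\ge d_G(g_1,g_2)$ and $d_H(h_1,z)+d_H(z,h_2)\ge d_H(h_1,h_2)$. Since the sum of the two right-hand sides equals the sum of the two left-hand sides, both inequalities must actually be equalities. This is equivalent to saying $w\in I_G(g_1,g_2)$ and $z\in I_H(h_1,h_2)$, that is, $(w,z)\in I_G(g_1,g_2)\times I_H(h_1,h_2)$. Reversing the argument (given $w,z$ in the respective factor intervals, summing the two equalities and applying~\eqref{eq1} recovers membership in $I_{G\cp H}$) gives the other inclusion.

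There is essentially no obstacle here: the statement is a direct packaging of~\eqref{eq1} together with the triangle inequality in $G$ and $H$. The only mild subtlety is to be explicit that the ``$\ge$'' in each factor forces equality in \emph{both} factors (not just in one), which follows immediately because two nonnegative slacks summing to zero must each be zero.
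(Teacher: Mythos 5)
Your proof is correct and is exactly the argument the paper has in mind: the paper gives no proof of Lemma~\ref{geo-interval}, merely noting that it follows from~\eqref{eq1} and citing Proposition~12.4 of~\cite{WI}, and your chain of equivalences (additivity of the distance, the triangle inequality in each factor, and the observation that two nonnegative slacks summing to zero must both vanish) is the standard way to fill in that claim.
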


Equality (\ref{eq1}) also easily implies the following fact (also proved in \cite{JT}).

\begin{lemma}
\label{lem:lay-v}
Let $G$ and $H$ be connected graphs and $R$ a general position set of $G\cp H$. If $u=(g,h)\in R$, then $V(\prescript{g}{}{H})\cap R=\{u\}$ or $V(G^{h})\cap R=\{u\}$.
\end{lemma}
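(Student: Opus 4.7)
The plan is to argue by contradiction, directly exploiting the additive distance formula~\eqref{eq1} for the Cartesian product. Suppose both conclusions fail, so that there exist vertices $v \in V(\prescript{g}{}{H})\cap R$ with $v \neq u$ and $w \in V(G^{h})\cap R$ with $w \neq u$. Writing $v = (g,h')$ with $h' \neq h$ and $w = (g',h)$ with $g' \neq g$, the three vertices $u,v,w$ are pairwise distinct elements of $R$.

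The key step is then to verify that $u$ lies on a shortest $v,w$-path. Using~\eqref{eq1}, the distance $d_{G\cp H}(v,w)$ equals $d_G(g,g') + d_H(h',h)$, while $d_{G\cp H}(v,u) + d_{G\cp H}(u,w) = d_H(h',h) + d_G(g,g')$. These two quantities coincide, so $u \in I_{G\cp H}(v,w)$, contradicting the hypothesis that $R$ is a general position set.

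I expect no real obstacle here; the proof is essentially a one-line consequence of~\eqref{eq1} once the right two witnesses $v,w$ are chosen in the two layers through $u$. (Alternatively, one could package the same argument via Lemma~\ref{geo-interval} by observing that $u \in I_G(g,g')\times I_H(h',h) = I_{G\cp H}(v,w)$, but the direct computation from~\eqref{eq1} is the cleanest.)
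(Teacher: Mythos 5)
Your proof is correct and matches the paper's intent exactly: the paper omits an explicit proof, stating only that the lemma ``easily'' follows from Equality~\eqref{eq1}, and your computation showing $d(v,u)+d(u,w)=d_H(h',h)+d_G(g,g')=d(v,w)$ for the two witnesses $v=(g,h')$ and $w=(g',h)$ is precisely that one-line argument.
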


For finite paths the already mentioned result $\gp(P_\infty \cp P_\infty) = 4$ reduces to:

\begin{lemma}
\label{p-path}{\rm(\cite{PMS})}
If $n_1, n_2\ge 2$, then
$$\gp(P_{n_1}\cp P_{n_2})= \left\{
\begin{array}{ll}
4; & \min\{n_1,n_2\}\geq 3, \\
\\
3;& {\rm otherwise}\,.
\end{array}
\right.$$
\end{lemma}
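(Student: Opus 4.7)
My plan is to handle the two regimes separately with matching upper and lower bounds. Identifying $V(P_{n_1}\cp P_{n_2})$ with $[n_1]\times[n_2]$, Lemma~\ref{geo-interval} gives the reformulation: a set $R$ is in general position iff no $R$-vertex lies in the axis-aligned bounding box spanned by two other $R$-vertices. Two consequences I will use throughout are: (a) no row or column of the grid contains three $R$-vertices (the middle one would lie in such a box); and (b) by Lemma~\ref{lem:lay-v}, two $R$-vertices sharing a row must each be the unique $R$-vertex in their column, and symmetrically.

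For the lower bounds. When $\min(n_1,n_2)\ge 3$, the set $\{(1,2),(2,1),(2,3),(3,2)\}$ sits inside the grid and is a general position set: a direct check via Lemma~\ref{geo-interval} shows that no one of these four points lies in the bounding box of any other two. When $\min(n_1,n_2)=2$ (say $n_1=2$, $n_2\ge 3$), the triple $\{(1,1),(1,n_2),(2,k)\}$ for any $2\le k\le n_2-1$ is verified to be in general position directly from~\eqref{eq1}.

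For the upper bound when $\min(n_1,n_2)\ge 3$, assume for contradiction that $|R|\ge 5$. By (b), $R$ partitions as $R=A\sqcup B\sqcup C$, with $A$ the row-pair vertices, $B$ the column-pair vertices, and $C$ the solitary vertices (unique in both row and column). Writing $|R|=2a+2b+c=5$ forces $c\in\{1,3,5\}$. In the permutation case $c=5$, the five second coordinates (ordered by the first) contain, by the Erd\H{o}s--Szekeres theorem, a monotone length-$3$ subsequence, which yields three $R$-vertices whose middle one lies in the box of the outer two. In the mixed cases $c\in\{1,3\}$, one first observes that any $R$-vertex outside a given row-pair must have its column strictly between the two columns of the pair (by applying general position to the triple consisting of the two pair-vertices together with the outside vertex), and symmetrically for column-pairs. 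A finite case analysis over the handful of sub-configurations (one row-pair plus three solitary vertices; two row-pairs plus one solitary; one row-pair together with one column-pair plus one solitary; and symmetric variants) then produces, in each case, a triple whose middle-$x$ vertex coincides with its middle-$y$ vertex, contradicting general position.

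For the upper bound when $\min(n_1,n_2)=2$, say $n_1=2$. If $|R|\ge 4$, then by (a) exactly two $R$-vertices sit in each of the two rows, so $R=\{(1,a_1),(1,a_2),(2,b_1),(2,b_2)\}$ with $a_1<a_2$, $b_1<b_2$, and by (b) $\{a_1,a_2\}\cap\{b_1,b_2\}=\emptyset$. The triple $\{(1,a_1),(1,a_2),(2,b_1)\}$ forces $a_1<b_1<a_2$, while the symmetric triple $\{(2,b_1),(2,b_2),(1,a_1)\}$ forces $b_1<a_1<b_2$; these are incompatible. The technical crux is the mixed case analysis in the first upper bound: Erd\H{o}s--Szekeres disposes of the permutation configuration cleanly, but each mixed arrangement of row-pairs, column-pairs, and solitary vertices requires a bespoke argument pinpointing the triple that witnesses the violation of general position.
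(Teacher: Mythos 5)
The paper offers no proof of this lemma to compare against --- it is quoted from~\cite{PMS} --- so I can only judge your argument on its own terms. Your framework is sound: Lemma~\ref{geo-interval} does reduce general position in a grid to ``no point lies in the axis-aligned bounding box of two others,'' your observations (a) and (b) are correct consequences of it and of Lemma~\ref{lem:lay-v}, the diamond $\{(1,2),(2,1),(2,3),(3,2)\}$ is a valid lower-bound witness when $\min\{n_1,n_2\}\ge 3$, the Erd\H{o}s--Szekeres step correctly kills the all-solitary configuration, and the $n_1=2$ upper bound is complete.

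Two genuine issues remain. First, the mixed cases $c\in\{1,3\}$ --- which you yourself identify as the technical crux --- are asserted, not proved, and the contradiction does not always arise in the form you describe (a triple whose middle-$x$ vertex is also its middle-$y$ vertex). The analysis does close, but it needs two further consequences of your betweenness observation: (i) two row-pairs cannot coexist, since each pair's two columns would have to lie strictly inside the other pair's column interval, which is contradictory; and (ii) given a row-pair in row $r$, any two further $R$-vertices with distinct columns must have their rows on opposite sides of $r$ (apply the box condition to each of them together with a suitable endpoint of the pair). Point (ii) eliminates one row-pair plus three solitary vertices, and also one row-pair plus one column-pair plus one solitary vertex, since the solitary vertex cannot lie on the opposite side of $r$ from both endpoints of the column-pair. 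Without some such argument the bound $\gp(P_{n_1}\cp P_{n_2})\le 4$ is not established. Second, your lower bound in the ``otherwise'' regime silently assumes $\max\{n_1,n_2\}\ge 3$. For $n_1=n_2=2$ the product is $C_4$, and every $3$-subset of $C_4$ contains a vertex on a geodesic between the other two, so $\gp(C_4)=2$; the statement as quoted is therefore false in that single case and no construction can rescue it. This is harmless for the paper, which only invokes the lemma when both factors have at least three vertices, but your write-up should either record the discrepancy or restrict the claim to $\max\{n_1,n_2\}\ge 3$.
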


To conclude the preliminaries we construct special maximal (with respect to inclusion) general position sets in products of trees.

\begin{lemma}\label{lower-gp} Let $T$ and $T^*$ be two trees with $\min\{n(T),n(T^*)\}\geq 3$, $v_{i}\in V(T)\setminus L(T)$, and $v_{j}^{*}\in V(T^*)\setminus L(T^*)$. Then $(L(T)\times\{v_{j}^{*}\})\cup(\{v_{i}\}\times L(T^{*}))$ is a maximal general position set of $T\cp T^{*}$.
\end{lemma}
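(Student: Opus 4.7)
The plan is to denote the proposed set by $S$ and verify first that $S$ satisfies the general position property, and then that every vertex of $V(T\cp T^*)\setminus S$ forms a forbidden triple with two vertices already in $S$.

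For the general position property I would split a triple $\{x,y,z\}\subseteq S$ by how its members distribute between the two arms $L(T)\times\{v_j^*\}$ and $\{v_i\}\times L(T^*)$; these arms are disjoint because $v_i\notin L(T)$ and $v_j^*\notin L(T^*)$. If all three members lie in one arm, they are three distinct leaves of one factor sitting in a single $T$-layer or $T^*$-layer, and no leaf of a tree can be an internal point of a shortest path, so they are in general position. For a mixed triple of, say, two points $(a_1,v_j^*),(a_2,v_j^*)$ and one $(v_i,c)$, Lemma~\ref{geo-interval} tells us exactly what the intervals look like: the interval between the two ``$T$-leaves'' has second coordinate $\{v_j^*\}$, and $c\ne v_j^*$ since $v_j^*$ is a non-leaf; while the interval $I_T(a_k,v_i)\times I_{T^*}(v_j^*,c)$ cannot contain $(a_{3-k},v_j^*)$, because $a_{3-k}$ is a leaf of $T$ distinct from both endpoints $a_k,v_i$ (using $v_i\notin L(T)$). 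The dual mixed case is handled symmetrically.

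For maximality I would take $u=(g,h)\notin S$ and exhibit two elements of $S$ for which $u$ lies in the corresponding interval, via Lemma~\ref{geo-interval} together with \eqref{eq1}. The useful tool is: for every non-leaf $g\ne v_i$ of $T$ there exists a leaf $a\in L(T)$ with $g\in I_T(v_i,a)$ (root $T$ at $v_i$ and descend from $g$ to a leaf), and for every non-leaf $g$ of $T$ (including $g=v_i$) there exist two leaves $a_1,a_2$ with $g\in I_T(a_1,a_2)$; analogously in $T^*$. With these tools the cases unfold as follows. If $g\in L(T)$ and $h\in L(T^*)$, take $(g,v_j^*),(v_i,h)\in S$, between which $u$ sits by direct distance computation. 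If $g\in L(T)$ but $h\notin L(T^*)\cup\{v_j^*\}$, pair $(g,v_j^*)\in S$ with $(v_i,c)\in S$ where $c\in L(T^*)$ is chosen so that $h\in I_{T^*}(v_j^*,c)$; the symmetric subcase is analogous. If neither coordinate is a leaf and $g\ne v_i$, $h\ne v_j^*$, choose $a$ with $g\in I_T(v_i,a)$ and $c$ with $h\in I_{T^*}(v_j^*,c)$ and use $(a,v_j^*),(v_i,c)$. The degenerate subcases $g=v_i$ or $h=v_j^*$ (with $u\notin S$) are closed off by straddling $v_j^*$ with two leaves of $T^*$ on different sides (respectively $g$ with two leaves of $T$), which is possible because $v_j^*$ and $g$ are non-leaves.

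No step is genuinely hard; the main care is bookkeeping the subcases and verifying that the chosen witnesses actually lie in $S$, which uses the hypotheses $v_i\notin L(T)$, $v_j^*\notin L(T^*)$, and $\min\{n(T),n(T^*)\}\ge 3$ (the last ensuring that each factor has a non-leaf, hence that both arms of $S$ are nonempty). Everything ultimately reduces, via \eqref{eq1} and Lemma~\ref{geo-interval}, to interval containments in a single tree, which are immediate from the rooted-tree descent picture.
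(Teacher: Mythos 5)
Your proof is correct and follows essentially the same route as the paper: both verify the general position property by splitting a triple according to how it meets the two arms of the set, and both establish maximality by straddling an arbitrary outside vertex with suitably chosen leaves, reducing everything via \eqref{eq1} and Lemma~\ref{geo-interval} to interval containments in the tree factors. (One cosmetic slip: in the degenerate subcase $g=v_i$ you should straddle $h$, not $v_j^*$, with two leaves of $T^*$; the tool you state earlier already provides exactly this.)
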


\begin{proof}
Set $R=(L(T)\times\{v_{j}^{*}\})\cup(\{v_{i}\}\times L(T^{*}))$ and let $V_0=\{u,v,w\}\subseteq R$. We first consider the case when $V_0\subseteq L(T)\times\{v_{j}^{*}\}$ or $V_0\subseteq \{v_{i}\}\times L(T^{*})$. By symmetry, assume that $V_0\subseteq L(T)\times\{v_{j}^{*}\}$. Then each vertex of $V_0$ is corresponding to a leaf of $L(T)$ in the layer $T^{v_j^*}\cong T$. Therefore $u,v,w$ do not lie on a common geodesic in $T\cp T^{*}$.

In the following, without loss of generality, we can assume that $u,w\in L(T)\times\{v_{j}^{*}\}$ with $u=(v_{k},v_{j}^{*})$, $w=(v_s,v_{j}^{*})$ and $v=(v_{i},v_{\ell}^{*})\in\{v_{i}\}\times L(T^{*})$.  By Equality (\ref{eq1}), we have $d(u,v)=d_{T}(v_{k},v_{i})+d_{T^{*}}(v_{j}^{*},v_{\ell}^{*})$ and $d(u,w)=d_{T}(v_{k},v_s)$, $d(w,v)=d_{T}(v_s,v_{i})+d_{T^{*}}(v_{j}^{*},v_{\ell}^{*})$. Note that $v_k$, $v_s$ are two distinct vertices in $L(T)$ of $T$ and $v_i\in V(T)\setminus L(T)$. Then $d_T(v_k,v_i)<d_T(v_k,v_s)+d_T(v_s,v_i)$ whenever $v_i$ lies on the $v_k,v_s$-geodesic or outside $v_k,v_s$-geodesic of $T$. This implies that $d(u,v)<d(u,w)+d(w,v)$ in $T\cp T^{*}$. Therefore $w$ does not lie on the $u,v$-geodesic in $T\cp T^{*}$. Analogously, neither $u$ lies on the $v,w$-geodesic nor $v$ lies on the $u,w$-geodesic of $T\cp T^{*}$. Thus $u,v,w$ do not lie on a common geodesic in $T\cp T^{*}$, which implies that $R$ is a general position set in $T\cp T^{*}$.

Next we prove the maximality of $(L(T)\times\{v_{j}^{*}\})\cup(\{v_{i}\}\times L(T^{*}))$ as a general position set in $T\cp T^{*}$. Otherwise, there is a general position set $R^{\prime}$ in $T\cp T^*$ of order greater than $\ell(T)+\ell(T^*)$ such that $R\subset R^{\prime}$. Then there exists a vertex $z\in R^{\prime}\backslash R$,  say  $z=(v_{p},v_{q}^{*})$. If $p=i$, then there exist two vertices $(v_i,v_s^*),(v_i,v_t^*)\in R$ such that  $z\in I_{T\cp T^{*}}((v_i,v_s^*),(v_i,v_t^*))$ (since $\prescript{v_i}{}{T^{*}}\cong T^{*}$). This is a contradiction showing that $p\neq i$. Similarly, we have $q\neq j$. Now we consider the positions of $v_p$ in $T$ and $v_q^{*}$ in $T^*$. Suppose first that $v_{p}\in L(T)$, $v_{q}^{*}\in L(T^{*})$. Then there are two vertices $(v_{p},v_{j}^{*}),(v_{i},v_{q}^{*})$ in $R$ such that  $z\in I_{T\cp T^{*}}((v_{p},v_{j}^{*}),(v_{i},v_{q}^{*}))$, contracting that $R\cup \{z\}$ is a general position set of $T\cp T^{*}$. If $v_{p}\in L(T)$ and $v_{q}^{*}\notin L(T^{*})$, then we select a vertex $v_{q^{\prime}}^*\in L(T^*)$ such that $v_{q^{\prime}}^*$ is closer to the leaf of the corresponding branching path than $v_q^*$ in $T^*$.  Then $z\in I_{T\cp T^{*}}((v_{p},v_{j}^{*}),(v_{i},v_{q^{\prime}}^{*}))$,  a contradiction. Similarly, $v_{p}\notin L(T)$ and $v_{q}^{*}\in L(T^{*})$ cannot occur. Finally we assume that $v_{p}\notin L(T)$, $v_{q}^{*}\notin L(T^{*})$. Now we select two vertices $v_{p^{\prime}}\in L(T)$ and $v_{q^{\prime}}^{*}\in L(T^{*})$ such that $v_{p^{\prime}}$ is closer to the leaf of the  branching path than $v_p$ in $T$ and $v_{q^{\prime}}^{*}$ is closer to the leaf  of the  branching path than $v_q^*$ in $T^*$.  But then  $(v_{p},v_{q}^{*})\in I_{T\cp T^{*}}((v_{p^{\prime}},v_{j}^{*}),(v_{i},v_{q^{\prime}}^{*}))$, a final contradiction.
\end{proof}

%%%%%%%%%%%%%%%%%%%%%%%%%%%%%%%%%%%%%%%%%%%%%%%%%%%%%%%
%%%%%%%%%%%%%%%%%%%%%%%%%%%%%%%%%%%%%%%%%%%%%%%%%%%%%%%
\section{Proof of Theorem~\ref{thm:main}}
\label{sec:proof}
%%%%%%%%%%%%%%%%%%%%%%%%%%%%%%%%%%%%%%%%%%%%%%%%%%%%%%%
%%%%%%%%%%%%%%%%%%%%%%%%%%%%%%%%%%%%%%%%%%%%%%%%%%%%%%%

If $T$ and $T^*$ are both paths, then Theorem~\ref{thm:main} holds by Lemma~\ref{p-path}. In the following we may thus without loss of generality assume that $T^*$ is not a path.
Lemma~\ref{lower-gp} implies that $\gp(T\cp T^{*}) \geq \gp(T) + \gp(T^{*})$, hence it  remains to prove that $\gp(T\cp T^{*})\leq \gp(T) + \gp(T^{*})$. Set $n = n(T)$, $n^{*} = n(T^*)$, $V(T)=\{v_{1},\ldots, v_{n}\}$, and $V(T^{*}) = \{v_{1}^{*}, \ldots,v_{n^{*}}^{*}\}$.

Assume on the contrary that there exists a general position set $R$ of $T$ such that $|R| > \gp(T) + \gp(T^{*})$. Since the restriction of $R$ to a $T$-layer of $T\cp T^{*}$ is a general position set of the layer (which is in turn isomorphic to $T$), the restriction contains at most $\gp(T) = \ell(T)$ elements. Similarly, the restriction of $R$ to a $T^*$-layer contains at most $\gp(T^*) = \ell(T^*)$ elements. We now distinguish the following cases.

\medskip\noindent
\textbf{Case 1.} There exists a $T$-layer $T^{v_{j}^{*}}$ with $|V(T^{v_{j}^{*}}) \cap R| = \gp(T)$,  or a $T^*$-layer $\prescript{v_{i}}{}T^{*}$ with $|V(\prescript{v_{i}}{}T^{*})\cap R| = \gp(T^*)$.

By the commutativity of the Cartesian product, we may without loss of generality assume that there is a layer $\prescript{v_{i}}{}{T^{*}}$ with $|R\cap V(\prescript{v_{i}}{}{T^{*}})| = \gp(T^*)$. Let $R = R_1\cup R_2$, where $R_1 = R\cap V(\prescript{v_{i}}{}{T^{*}})$ and $R_2 = R\setminus R_1$, that is, $R_2 = \bigcup\limits_{t\in [n]\setminus\{i\}}\Big(V(\prescript{v_{t}}{}T^{*})\cap R\Big)$. Let further $S^{*}$ be the projection of $R\cap V(\prescript{v_{i}}{}{T^{*}})$ on $T^{*}$, that is, $S^{*}=\{v_{j}^{*}:\ (v_{i},v_{j}^{*})\in R_1\}$. Since $|R_1| = \gp(T^*)$,  our assumption implies $|R_2| \geq \gp(T)+1$. Then, as $\gp(T) = \ell(T)$, there exist two different vertices $w=(v_p,v_q^*)$ and $w^{\prime}=(v_{p^{\prime}},v_{q^{\prime}}^*)$ from $R_2$ such that  $v_p$ and $v_{p^{\prime}}$ lie on a same branching path $P$ of $T$. (Note that it is possible that $v_p = v_{p^{\prime}}$.) We may assume that $d_{T}(v_{p^{\prime}}, x) \le d_{T}(v_p,x)$, where $x$ is the leaf of $P$. We proceed by distinguishing two subcases based on the position of $v_q^*$ and  $v_{q^{\prime}}^*$ in $T^*$.

\medskip\noindent
\textbf{Case 1.1.} There exists a branching path $P^*$ of $T^*$ that contains both $v_q^*$ and $v_{q^{\prime}}^*$. \\
Recall that $T^*$ is not a path. Lemma~\ref{T-non-leaf} implies that a vertex of a tree belongs to a gp-set if and only if it lies on a pendant path and has degree $1$ or $2$. Therefore, we can select $P^*$ with the root of degree at least $3$.  Assume that $d_{T^*}(v_{q^{\prime}}^*, y) \le d_{T^*}(v_q^*,y)$, where $y$ is the leaf of $P^*$. (The reverse case can be treated analogously.)
Since $S^*$ is a gp-set of $T^*$ which is not isomorphic to a path, there is a vertex $v_k^*\in S^*$  lying on $P^*$. So we may consider that $P^*$ is a branching path that contains $v_q^*$, $v_{q^{\prime}}^*$ and a vertex $v_{k}^*\in S^*$. (It is possible that some of these vertices are the same.) Let $z=(v_i,v_k^*)$. Then $z\in R_1$. We proceed by distinguishing the following subcases based on the position of $v_{p}$, $v_{p^{\prime}}$ and $v_{i}$ in $T$.

\medskip\noindent
\textbf{Subcase 1.1.1.} $v_{p^{\prime}}\in I(v_i,v_p)$. \\
In this subcase, if $v_{k}^*$ is closer than $v_q^*$, $v_{q^{\prime}}^*$ to the leaf $y$ of $P^*$, then, by Lemma \ref{geo-interval},  $w^{\prime} \in I_{T\cp T^{*}}(w,z)$, a contradiction.

If $v_{k}^*\in I(v_{q}^*,v_{q^{\prime}}^{*})$, then since $\ell(T^*)\geq 3$, there exists $z^{\prime}=(v_{i},v_{k^{\prime}}^{*})\in \{v_{i}\}\times S^{*}$ such that $v_{k}^{*}$,$v_{q}^{*}\in I(v_{q^{\prime}}^*,v_{k^{\prime}}^{*})$ in $T^*$. Then we have \begin{eqnarray*}
d(w^{\prime},z^{\prime})&=&d_T(v_{p^{\prime}},v_i)+d_{T^*}(v_{q^{\prime}}^*,v_{k^{\prime}}^*)\\
              &=&d_T(v_{p^{\prime}},v_i)+d_{T^*}(v_{q^{\prime}}^*,v_{k}^*)+d_{T^*}(v_{k}^*,v_{k^{\prime}}^*)\\
              &=&d(w^{\prime},z)+d(z,z^{\prime}),
 \end{eqnarray*}
which implies that  $z\in I_{T\cp  T^{*}}(w^{\prime},z^{\prime})$, a contradiction.

\medskip\noindent
\textbf{Subcase 1.1.2.}  $v_{i}\in I(v_{p},v_{p^{\prime}})$.\\
In this subcase, if $v_{k}^*\in I(v_q^*,v_{q^{\prime}}^*)$ in $P^*$, then   $z\in I_{T\cp  T^{*}}(w,w^{\prime})$ by Lemma \ref{geo-interval}, a contradiction.

Assume that $v_{k}^*$ is closer than $v_q^*$, $v_{q^{\prime}}^*$ to the leaf of $P^*$. Since $|S^{*}|=\ell(T^*)\geq  3$, there is a vertex $z^{\prime}=(v_i,v_{k^{\prime}}^*)\in \{v_i\}\times S^{*}$ such that $v_q^*$, $v_{q^{\prime}}^*\in I(v_{k}^*,v_{k^{\prime}}^*)$ in $T^{*}$. Let $v_{k^{\prime}}^*$ be on a branching path ${P^{\prime}}^*$ in $T^{*}$ where ${P^{\prime}}^*\neq P^*$. Note that $\ell(T) + 1\geq3$. There exists at least one vertex $a=(v_{x},v_{y}^{*})\in R_2 \setminus\{w,w^{\prime}\}$. Next we consider the positions of $v_x,v_{y}^{*}$ in $T,T^*$, respectively.

Suppose first that $v_{y}^{*}\in V(P^*\cup {P^{\prime}}^*)$. If $v_x$, $v_p$, $v_{p^{\prime}}$ and $v_i$ lie on a path in $T$, then there are five vertices $w$, $w^{\prime}$, $z$, $z^{\prime}$ and $a$ in $R_2$, three of which lie on a common geodesic in $T\cp T^*$, a contradiction.  Note that if $T$ is a path, then we are done as above. Therefore, assume that $T$ is not isomorphic to a path in the following and the root of $P$ has degree at least $3$. Otherwise, $v_x\notin P$ and $v_x,v_p$ lie on a common branching path in $T$. Let $V_s$ be the set of vertices of $T$ but not contained in $T_{ip^{\prime}}$ where $T_{ip^{\prime}}$ is the subtree of $T-v_p$ containing $v_i$ and $v_{p^{\prime}}$.
If there is a vertex $a^{\prime}=(v_s,v_l^{*})\in R_2$ with $v_s\in V_s$, then $R_2$ contains $w$, $w^{\prime}$, $z$, $z^{\prime}$ and $a^{\prime}$, three of which are on a common geodesic, a contradiction. Therefore, the first coordinate of any vertex in $R_2$ cannot be in $V_s$. Assume that $P^{\prime}\neq P$ is any  branching path  containing $v_p$ and a leaf both in $T_{ip^{\prime}}$ and $T$. Then,  besides $w$, $P^{\prime}\cp T^*$ contains at most one vertex  in $R_2$ of $T\cp T^{*}$. Otherwise, $P^{\prime}\cp T^*$ contain two vertices $h$, $h^{\prime}$ in $R_2$. Then there exist two vertices $h_0,h_{0}^{\prime} \in\{v_i\}\times S^*$ such that three vertices from $\{h,h^{\prime},h_0,h_{0}^{\prime},w\}$ lie on some geodesic in $T\cp T^{*}$, a contradiction. (Here $h_0$ may be equal to $h_{0}^{\prime}$.) Note that $V_s$ contains at least two leaves of $T$ since the root of $P$ (just in $V_s$) has degree at least $3$. Then $T_{ip^{\prime}}$ has at most $\ell(T)-2$ leaves in $T$.  Since $P\cp T^*$ contains two vertices $w$ and $w^{\prime}$ in $R_2$, we have  $|R_2|\leq \ell(T)-2+1< \ell(T)=\gp(T)$, a contradiction with the assumption.

Assume now that $v_{y}^{*}\notin V(P^*\cup {P^{\prime}}^*)$. Then there exists a vertex $z^{\prime\prime}=(v_i,v_{k^{\prime\prime}}^{*})\in\{v_i\}\times S^{*}$ such that $v_{y}^{*},v_{k^{\prime\prime}}^{*}$ lie on a common branching path in $T^*$. If $v_{y}^{*}$ is closer to the leaf of the branching path than $v_{k^{\prime\prime}}^{*}$ in $T^*$, then $v_i\in I(v_x,v_i)$ and $v_{k^{\prime\prime}}^{*}\in I(v_{y}^{*},v_{k}^{*})$. Therefore, by Lemma \ref{geo-interval}, we get $z^{\prime\prime}\in I_{T\cp T^{*}}(a,z)$, a contradiction.
In the case that $v_{k^{\prime\prime}}^{*}$ is closer to the leaf of the branching path than $v_{y}^{*}$ in $T^*$, we consider the positions of $v_x$, $v_p$, $v_{p^{\prime}}$ and $v_i$ in $T$. Let $V_{1}=\{z,z^{\prime},w,w^{\prime},a,z^{\prime\prime}\}$. Then $V_1\subseteq R_2$. If $v_x$, $v_p$, $v_{p^{\prime}}$ and $v_i$ lie on a path in $T$, then there exist three vertices in $V_1$ lying on a common geodesic in $T\cp T^*$, a  contradiction again. Otherwise, $v_x\notin P$ and $v_x,v_p$ lie on a common branching path in $T$. Similarly as above, a contradiction occurs.

\medskip\noindent
\textbf{Subcase 1.1.3.}  $v_{p}\in I(v_{i},v_{p^{\prime}})$.\\
In this subcase, since $\ell(T^*)\geq 3$, there exists a vertex $z^{\prime}=(v_{i},v_{k^{\prime}}^{*})\in \{v_{i}\}\times S^{*}$ such that $v_{k^{\prime}}^{*}\notin P^*$ and $v_{q}^{*}\in I(v_{k^{\prime}}^{*},v_{q^{\prime}}^{*})$ in $T^{*}$. Since \begin{eqnarray*}
d(z^{\prime},w^{\prime})&=&d_T(v_i,v_{p^{\prime}})+d_{T^*}(v_{k^{\prime}}^*,v_{q^{\prime}}^*)\\
              &=&d_T(v_i,v_{p})+d_{T^*}(v_{k^{\prime}}^*,v_{q}^{*})+d_T(v_p,v_{p^{\prime}})+d_{T^*}(v_{q}^*,v_{q^{\prime}}^*)\\
              &=&d(z^{\prime},w)+d(w,w^{\prime}),
 \end{eqnarray*}
we have  $w\in I_{T\cp T^{*}}(z^{\prime},w^{\prime})$, a contradiction.

\medskip\noindent
\textbf{Subcase 1.1.4.} $v_{i}\notin V(P)$ such that $v_{i}$, $v_{p}$ lie on a same branching path in $T$.\\
In this subcase, since $\ell(T^*)\geq 3$, there is a vertex $z^{\prime}=(v_i,v_{k^{\prime}}^*)\in \{v_i\}\times S^{*}$ such that $v_{q}^{*}\in I(v_{k^{\prime}}^{*},v_{k}^{*})$ in $T^{*}$.
If $v_{k}^{*}\in I(v_{q}^{*},v_{q^{\prime}}^*)$ , then obviously $v_{k}^{*}\in I(v_{q}^{*},v_{k^{\prime}}^*)$ and therefore,
\begin{eqnarray*}
d(w^{\prime},z^{\prime})&=&d_T(v_{p^{\prime}},v_i)+d_{T^*}(v_{q^{\prime}}^*,v_{k^{\prime}}^*)\\
              &=&d_T(v_{p^{\prime}},v_i)+d_{T^*}(v_{q^{\prime}}^*,v_{k}^*)+d_{T^*}(v_{k}^*,v_{k^{\prime}}^*)\\
              &=&d(w^{\prime},z)+d(z,z^{\prime})\,.
\end{eqnarray*}
 We conclude that $z\in I_{T \cp T^{*}}(w^{\prime},z^{\prime})$, a contradiction.

If $v_{k}^{*}$ is closer to the leaf of $P^*$ than $v_{q}^{*},v_{q^{\prime}}^*$, then  we get a contradiction similarly as in Subcase 1.1.2.

\medskip\noindent
\textbf{Case 1.2.} $v_q^*$ and $v_{q^{\prime}}^*$ do not lie on a same branching path in $T^*$.\\
In this subcase, we may assume that $v_q^*$ and $v_{q^{\prime}}^*$ lie on distinct branching paths $P^{*}$ and $P^{\prime*}$ in $T^{*}$, respectively. Since $\ell(T^*)\geq 3$ and $T^*$ is not isomorphic to a path, there exist two vertices $z=(v_i,v_{k}^*)$ and $z^{\prime}=(v_{i},v_{k^{\prime}}^{*})$ from $ \{v_i\}\times S^{*}$, such that $v_{k}^*\in P^*$ and $v_{k^{\prime}}^*\in P^{\prime*}$. We consider the following subcases based on the positions of $v_{p}$, $v_{p^{\prime}}$ and $v_{i}$ in $T$.

\medskip\noindent
\textbf{Subcase 1.2.1.}  $v_{p^{\prime}}\in I(v_i,v_p)$.\\
In this subcase, if $v_{k^{\prime}}^*$ is closer than $v_{q^{\prime}}^*$ to the leaf of $P^{\prime*}$, then $v_{p^{\prime}}\in I(v_p,v_i)$ and $v_{q^{\prime}}^*\in I(v_q^*,v_{k^{\prime}}^*)$. Lemma \ref{geo-interval} gives $w^{\prime}\in I_{T\cp T^{*}}(w,z^{\prime})$, a contradiction.  On the other hand, if $v_{q^{\prime}}^*$ is closer than $v_{k^{\prime}}^*$ to the leaf of $P^{\prime*}$, then $v_i\in I(v_i,v_{p^\prime})$ and $v_{k^\prime}^*\in I(v_k^*,v_{q^\prime}^*)$, hence Lemma~\ref{geo-interval} gives  $z^{\prime}\in I_{T\cp T^{*}}(w^{\prime},z)$, a contradiction again.

\medskip\noindent
\textbf{Subcase 1.2.2.} $v_{i}\in I(v_{p},v_{p^{\prime}})$.\\
In this subcase, we first assume that $v_{q^{\prime}}^*$ is closer than $v_{k^\prime}^*$ to the leaf of $P^{\prime*}$.  Then $v_i\in I(v_i,v_{p^{\prime}})$ and $v_{k^\prime}^*\in I(v_k^*,v_{q^\prime}^*)$. Therefore, by Lemma \ref{geo-interval}, we get $z^{\prime}\in I_{T\cp T^{*}}(z,w^{\prime})$ as a contradiction.
Otherwise we suppose that $v_{k^\prime}^*$ is closer than $v_{q^{\prime}}^*$  to the leaf of $P^{\prime*}$.  If $v_{q}^{*}$ is closer than $v_{k}^{*}$ to the leaf of $P^*$, then $v_i\in I(v_p,v_{i})$ and $v_{k}^*\in I(v_{q}^*,v_{k^\prime}^*)$. Therefore, by Lemma \ref{geo-interval}, we get $z\in I_{T\cp T^{*}}(w,z^{\prime})$, a contradiction. In the case that  $v_{k}^*$ is closer than $v_{q}^*$ to the leaf of $P^*$, we find a contradiction similarly as the proof of Subcase 1.1.2.

\medskip\noindent
\textbf{Subcase 1.2.3.} $v_{p}\in I(v_{i},v_{p^{\prime}})$.\\
In this subcase, if $v_{k}^*$ is closer than $v_{q}^*$ to the leaf of $P^{*}$, then $v_{p}\in I(v_{i},v_{p^{\prime}})$ and $v_{q}^*\in I(v_{k}^*,v_{q^{\prime}}^*)$.  So Lemma~\ref{geo-interval} gives $w\in I_{T\cp T^{*}}(z,w^{\prime})$, a contradiction. And if $v_{q}^*$ is closer than $v_{k}^*$ to the leaf of $P^{*}$, then $v_{i}\in I(v_{i},v_{p})$ and $v_{k}^*\in I(v_{k^{\prime}}^*,v_{q}^*)$,  hence we get $z\in I_{T\cp T^{*}}(z^{\prime},w)$.

\medskip\noindent
\textbf{Subcase 1.2.4.} $v_{i}\notin V(P)$ such that $v_{i}$, $v_{p}$ lie on a same branching path in $T$.\\
First suppose that $v_{q}^{*}$ is closer to the leaf than $v_{k}^{*}$ in $P^{*}$, then $v_{i}\in I(v_{i},v_{p})$ and $v_{k}^*\in I(v_{q}^*,v_{k^{\prime}}^*)$. Thus, by Lemma \ref{geo-interval},  we get $z\in I_{T\cp T^{*}}(w,z^{\prime})$.

Assume that $v_{k}^{*}$ is closer than $v_{q}^{*}$ to the leaf of $P^{*}$. If $v_{q^{\prime}}^*$ is closer to the leaf than $v_{k^\prime}^*$, then $v_i\in I(v_i,v_{p^{\prime}})$ and $v_{k^\prime}^*\in I(v_k^*,v_{q^\prime}^*)$, which gives  $z^{\prime}\in I_{T\cp T^{*}}(z,w^{\prime})$. If $v_{{k}^{\prime}}^{*}$ is closer than $v_{{q}^{\prime}}^{*}$ to the leaf of ${P^{\prime}}^*$, we can proceed similarly as in Subcase 1.1.4.

Now we turn to the second case.

\medskip\noindent
\textbf{Case 2.} $|R\cap V(\prescript{v_{k}}{}{T^{*}})| < \ell(T^*)$ for any $k\in [n]$, and $|R\cap V(T^{v_t^*})| < \ell(T)$ for any $t\in [n^*]$.\\
In this case, let $\prescript{v_{i}}{}{T^{*}}$ be a layer with $|R\cap V(\prescript{v_{i}}{}{T^{*}})| = \max\{|R\cap V(\prescript{v_{k}}{}{T^{*}})|:k\in[n]\}$. Let $R = R_1\cup R_2$ where $R_1 = R\cap V(\prescript{v_{i}}{}{T^{*}})$ and $R_2 = R\setminus R_1$, that is, $R_2 = \bigcup\limits_{k\in [n]\setminus\{i\}}\Big(V(\prescript{v_{k}}{}T^{*})\cap R\Big)$. Set further $S^{*}=\{v_{j}^{*}:\ (v_{i},v_{j}^{*})\in R_1\}$. Then $1\leq|S^{*}|\leq \ell(T^*)-1$.

Assume first $|S^*|=1$. Therefore $|R\cap V(\prescript{v_{k}}{}{T^{*}})|\leq1$ for any $k\in[n]$. Next we only need to consider $|R\cap V(T^{v_j^*})|\leq1$ for any $j\in[n^*]$. (If $|R\cap V(T^{v_j^*})|\geq2$ for some $j\in[n^*]$, by commutativity of $T\cp T^*$, the proof is similar to the subcase  in which $2\leq|S^{*}|\leq\ell(T^*)-1$.) Therefore, suppose that $|R\cap V(T^{v_j^*})|\leq1$ for any $j\in[n^*]$. Then $|R|\leq\min\{n,n^{*}\}$.  We now claim that $|R|\leq \ell(T)+\ell(T^*)$. If not, then  since $|R|\geq \ell(T)+\ell(T^*)+1\geq  6$, there exist three vertices $u=(v_{p},v_{j}^{*})$, $v=(v_{p^{\prime}},v_{q}^{*})$ and $w=(v_{s},v_{\ell}^{*})$  from $R$ such that $v_p,v_{p^{\prime}}$ lie on a same branching path in $T$, and $v_{j}^{*},v_{\ell}^{*}$ lie on a common branching path in $T^*$. Note that there may be $p^{\prime} = s, q=\ell$. But we can always select  a vertex $h\in R \setminus \{u,v,w\}$ such that $u,v,h$ or $u,w,h$ lie on a same geodesic in $T\cp T^{*}$, which is a contradiction. So our result holds  when  $|S^*|=1$.

Suppose  second that $2\leq|S^{*}|\leq\ell(T^*)-1$.  As $|R_1| = |S^*|$, we need to prove that $|R_2|\leq \ell(T)+\ell(T^*)-|S^*|$. Assume on the contrary that $|R_2|\geq \ell(T)+\ell(T^*)-|S^*| + 1$.
Since $|S^*|\geq 2$, there are two distinct vertices $w=(v_{i},v_{j}^{*})$ and $w^{\prime}=(v_{i},v_{j^{\prime}}^{*})$ from $\{v_{i}\}\times S^{*}$. We distinguish the following cases based on the positions of $v_j^*$, $v_{j^{\prime}}^*$ in $T^*$.

\medskip\noindent
\textbf{Case 2.1.} $v_j^*$ and $v_{j^{\prime}}^*$ lie on a same branching path $P^*$ of $T^*$.\\
In this subcase,  we may without loss of generality assume that $v_{j^{\prime}}^*$ is closer than $v_j^*$  to the leaf of $P^*$.  Let $T^*_{v_{j^{\prime}}^*}$ be the maximal subtree of $T^*-v_j^*$ containing $v_{j^{\prime}}^*$ and let $V_{s^*} = V(T^*)\setminus V(T^*_{v_{j^{\prime}}^*})$. Let further $S_{1}^{*} = \{v_{q}^{*}:\ v_{q}^{*}\in I(v_j^*,v_{\ell}^*), v_\ell^*\in S^{*}\cap V(T^*_{v_{j^{\prime}}^*})\}$. Now we prove the following claim.

\medskip\noindent
\textbf{Claim 1.} If $z=(v_{p},v_{t}^{*})\in R_2$, then $v_{t}^{*}\in S_{1}^{*}$ .

\medskip\noindent
\textbf{Proof of Claim 1.}
If not, suppose first that $v_{t}^{*}\in V(P^*)$ is closer than $v_{j^{\prime}}^*$ to the leaf of $P^*$. Then $v_i\in I(v_i,v_p)$ and $v_{j^{\prime}}^*\in I(v_{t}^{*},v_{j}^*)$. Hence,  $w^{\prime}\in I_{T\cp T^{*}}(w,z)$. And if
$v_{t}^{*}\in V_{s^*}$, then $v_j^*\in I(v_{t}^{*},v_{j^{\prime}}^*)$. Combining this fact with $v_{i}\in I(v_{i},v_{p})$, we have  $w\in I_{T\cp T^{*}}({w^\prime},z)$. This proves Claim 1.

By Claim 1,
 we have $|\bigcup\limits_{v_{t}^{*}\in S_{1}^{*}}\big(V(T^{v_{t}^{*}})\cap R\big)| \geq \ell(T)+\ell(T^{*})-|S^{*}| + 1 \geq \ell(T)+1$.
Then there exist two vertices $z=(v_{p},v_{\ell}^{*})$ and $z^{\prime}=(v_{p^{\prime}},v_{\ell^{\prime}}^{*})$ from $\cup_{v_{t}^{*}\in S_{1}^{*}}\big(V(T^{v_{t}^{*}})\cap R\big)$ such that $v_{\ell}^{*},v_{\ell^{\prime}}^{*}\in S_{1}^{*}$ and $v_{p},v_{p^{\prime}}$ lie on a same branching path $P$ in $T$. Without loss of generality, let $v_{p^{\prime}}$ be closer than $v_{p}$ to the leaf of $P$, and let $v_{\ell}^{*},v_{\ell^{\prime}}^{*}\in I(v_j^*,v_{j^{\prime}}^*)$ (by the definition of $S_1^*$). We consider the following subcases according to the positions of $v_{i},v_{p},v_{p^{\prime}}$ in $T$.

\medskip\noindent
\textbf{Subcase 2.1.1.} $v_{p^{\prime}}\in I(v_i,v_p)$.\\
If $v_{\ell^{\prime}}^{*}$ is closer than $v_{\ell}^{*}$ to $v_{j^{\prime}}^*$ in $P^{*}$, then we have $v_{p^{\prime}}\in I(v_{i},v_{p})$ and $v_{\ell^{\prime}}^*\in I(v_{\ell}^*,v_{{j^\prime}}^*)$. Therefore, $z^{\prime}\in I_{T\cp T^{*}}(z,{w^\prime})$. And if $v_{\ell}^{*}$ is closer than $v_{\ell^{\prime}}^{*}$ to $v_{j^{\prime}}^*$ in $P^{*}$, then we have $v_{p^{\prime}}\in I(v_{i},v_{p})$ and $v_{\ell^{\prime}}^*\in I(v_{\ell}^*,v_{j}^*)$ and so $z^{\prime}\in I_{T\cp T^{*}}(z,{w})$.

\medskip\noindent
\textbf{Subcase 2.1.2.} $v_{i}\in I(v_{p},v_{p^{\prime}})$.\\
Note that $\ell(T)+\ell(T^{*})-|S^{*}| + 1 \geq 4$. Then there  exists at least a vertex $a=(v_{x},v_{y}^{*})\in \cup_{v_{t}^{*}\in S_{1}^{*}}\big(V(T^{v_{t}^{*}})\cap R\big)$ different from $z$ and $z^{\prime}$. Based on the position of $v_y^*$ ($v_{y}^{*}\in P^*$ or $v_{y}^{*}\notin P^*$) in $T^*$, and the positions of $v_x$, $v_{i}$, $v_{p}$ and $v_{p^{\prime}}$ in $T$, we get contradictions using a similar proof as in Subcase 1.1.2.

\medskip\noindent
\textbf{Subcase 2.1.3.}  $v_{p}\in I(v_{i},v_{p^{\prime}})$.\\
If $v_{\ell^{\prime}}^{*}$ is closer than $v_{\ell}^{*}$  to $v_{j^{\prime}}^*$ in $T^{*}$, then $v_{p}\in I(v_{i},v_{p^{\prime}})$ and $v_{\ell}^*\in I(v_{j}^*,v_{\ell^\prime}^*)$, therefore $z\in I_{T\cp T^{*}}(w,{z^\prime})$. And if  $v_{\ell}^{*}$ is closer than $v_{\ell^{\prime}}^{*}$ to $v_{j^{\prime}}^*$  in $T^{*}$, then $v_{p}\in I(v_{i},v_{p^{\prime}})$ and $v_{\ell}^*\in I(v_{j^{\prime}}^*,v_{\ell^\prime}^*)$, hence $z\in I_{T\cp T^{*}}(w,{z^\prime})$.

\medskip\noindent
\textbf{Subcase 2.1.4.} $v_{i}\notin V(P)$ such that $v_{i}$, $v_{p}$ lie on a same branching path in $T$.\\
Since $\ell(T)+\ell(T^{*})-|S^{*}| + 1 \geq 4$, there exists a vertex $(v_{x},v_{y}^{*})\in \cup_{v_{t}^{*}\in S_{1}^{*}}\big(V(T^{v_{t}^{*}})\cap R\big)$. Proceeding similarly as in Subcase 1.1.4, we get required contradictions.  But then $|\cup_{v_{t}^{*}\in S_{1}^{*}}\big(V(T^{v_{t}^{*}})\cap R\big)| \leq \ell(T)+\ell(T^{*})-|S^{*}|$, a contradiction with the assumption.

\medskip\noindent
\textbf{Case 2.2.} $v_j^*$,$v_{j^{\prime}}^*$ lie on different branching paths $P^*$, $P^{\prime*}$ in $T^*$, respectively.\\
In this subcase, let $S_2^*$ be a set of vertices of $\prescript{v_{i}}{}T^{*}$ closer to the leaf of a branching path than $v_{g}^{*}$ for any $v_{g}^{*}\in S^*$. Note that  $S^*\cap S_{2}^{*} = \emptyset$. We prove the following claim.

\medskip\noindent
\textbf{Claim 2.} If $(v_{p},v_t^{*})$ in  $R_2$, then $v_{t}^{*}\in V(T^{*})\setminus(S^*\cup S_{2}^{*})$.

\medskip\noindent
\textbf{Proof of Claim 2.}
Lemma~\ref{lem:lay-v} implies $v_{t}^{*}\notin S^*$.
Assume that $v_{t}^{*}\in S_{2}^*$ lies on a same branching path for some $v_{g}^{*}$ in $T^{*}$.  Note that $|S^*| \geq 2$. Then there exists another vertex $v_{g^{\prime}}^*$ such that $v_g^*\in I(v_t^*,v_{g^{\prime}}^*)$. Combining this fact with $v_{i}\in I(v_{i},v_{p})$, we arrive at a contradiction $w\in I_{T\cp T^{*}}(z,{w^\prime})$. This proves Claim 2.

Let now $S_{1^\prime}^* = \{v_q^*:\ v_q^*\in I(v_g^*,v_{g^{\prime}}^*),v_g^*,v_{g^{\prime}}^*\in S^*\}$. By a parallel reasoning as in Subcase 2.1 and with Claim~2 in hands we infer that $|\cup_{v_{t}^{*}\in S_{1^{\prime}}^{*}}\big(V(T^{v_{t}^{*}})\cap R\big)|\leq \ell(T)$.

Let $S = \{v_{k}:\ (v_{k},v_{t}^{*})\in\cup_{v_{t}^{*}\in S_{1^{\prime}}^{*}}\big(V(T^{v_{t}^{*}})\cap R\big)\}$  and set $S^{**} = V(T^{*})\setminus (S^{*}\cup S_{1^\prime}^{*})$. From the assumption we have
$|\cup_{v_{t}^{*}\in  S^{**}}\big(V(T^{v_{t}^{*}})\cap R\big)| \geq \ell(T)+\ell(T^*)-|S|-|S^*| +1$.
So there exists a vertex $z=(v_p,v_{\ell}^{*})\in\cup_{v_{t}^{*}\in  S^{**}}\big(V(T^{v_{t}^{*}})\cap R\big)$, and we can always select two distinct vertices $u=(v_h,v_{g}^*)$ and $v=(v_{h^\prime},v_{g^\prime}^*)$ from  $R$ such that $v_p$ and $v_h$ lie on a same branching path in $T$, while  $v_{\ell}^{*}$ and $v_{g^\prime}^*$ lie on a common branching path in $T^*$. But we can choose another vertex $w\in  R$ such that either $u,w,z$ or $u,v,z$ lie on a same geodesic in $T\cp T^{*}$ as a contradiction. Therefore,
$$|\bigcup\limits_{v_{t}^{*}\in  S^{**}}\Big(V(T^{v_{t}^{*}})\cap R\Big)|  \leq \ell(T)+\ell(T^*)-|S|-|S^*|.$$
and we are done.

%%%%%%%%%%%%%%%%%%%%%%%%%%%%%%%%%%%%%%%%%%%%%%%%%%%%%%%
%%%%%%%%%%%%%%%%%%%%%%%%%%%%%%%%%%%%%%%%%%%%%%%%%%%%%%%
\section*{Acknowledgements}
%%%%%%%%%%%%%%%%%%%%%%%%%%%%%%%%%%%%%%%%%%%%%%%%%%%%%%%
%%%%%%%%%%%%%%%%%%%%%%%%%%%%%%%%%%%%%%%%%%%%%%%%%%%%%%%
Kexiang Xu is supported by NNSF of China (grant No.\ 11671202, and the China-Slovene bilateral grant 12-9). Sandi Klav\v{z}ar acknowledges the financial support from the Slovenian Research Agency (research core funding P1-0297, projects J1-9109, J1-1693, N1-0095, and the bilateral grant BI-CN-18-20-008).


\begin{thebibliography}{99}

\bibitem{BSA}
B. S.~Anand, S. V.~Ullas Chandran, M.~Changat, S.~Klav\v{z}ar, E. J.~Thomas,
Characterization of general position sets and its applications to cographs and bipartite graphs,
Appl.\ Math.\ Comput.\ 359 (2019) 84--89.

\bibitem{balak-2016}
R.~Balakrishnan, S. F.~Raj, T.~Kavaskar,
{$b$}-coloring of {C}artesian product of trees,
Taiwanese J.\ Math.\ 20 (2016) 1--11.

\bibitem{MGS}
M.~Ghorbani, S.~Klav\v{z}ar, H.R.~Maimani, M.~Momeni, F.~Rahimi-Mahid, G.~Rus,
The general position problem on Kneser graphs and on some graph operations,
Discuss.\ Math.\  Graph Theory (2019) doi:10.7151/dmgt.2269.

\bibitem{WI}
W.~Imrich, S.~Klav\v{z}ar, D. F.~Rall,
Topics in Graph Theory: Graphs and their Cartesian Product,
A K Peters, Wellesley, MA, 2008.

\bibitem{Sk}
S.~Klav\v{z}ar, I. G.~Yero,
The general position problem and strong resolving graphs,
Open Math.\ 17 (2019) 1126--1135.

\bibitem{SKB}
S.~Klav\v{z}ar, B.~Patk{\'o}s, G.~Rus, I. G.~Yero,
On general position sets in Cartesian grids,
arXiv:1907.04535  [math.CO] (July 25, 2019).

\bibitem{KR}
S.~Klav\v{z}ar, G.~Rus,
The general position number of integer lattices,
Appl.\ Math.\ Comput., to appear.

\bibitem{korner-1995}
J.~K\"orner,
On the extremal combinatorics of the Hamming space,
J.\ Combin.\ Theory Ser A 71 (1995) 112--126.

\bibitem{PM}
P.~Manuel, S.~Klav\v{z}ar,
A general position problem in graph theory,
Bull.\ Aust.\ Math.\ Soc.\ 98 (2018) 177--187.

\bibitem{PMS}
P.~Manuel, S.~Klav\v{z}ar,
The graph theory general position problem on some interconnection networks,
Fund.\ Inform.\ 163 (2018) 339--350.

\bibitem{patkos-2019+}
B.~Patk{\'o}s,
On the general position problem on Kneser graphs,
Ars Math.\ Contemp.\ (2020), date accessed: 01 Sep.\ 2020, doi:https://doi.org/10.26493/1855-3974.1957.a0f.

\bibitem{shiu-2018}
W. C.~Shiu, R. M.~Low,
The integer-magic spectra and null sets of the {C}artesian product of trees,
Australas.\ J.\ Combin.\ 70 (2018) 157--167.

\bibitem{JT}
J.~Tian, K.~Xu,
The general position number of Cartesian products of trees or cycles with general graphs,
submitted.

\bibitem{ullas-2016}
S. V.~Ullas~Chandran, G.~Jaya~Parthasarathy,
The geodesic irredundant sets in graphs,
Int.\ J.\ Math.\ Combin.\ 4 (2016) 135--143.

\bibitem{wood-2011}
D. R.~Wood,
Colouring the square of the {C}artesian product of trees,
Discrete Math.\ Theor.\ Comput.\ Sci.\ 13 (2011) 109--111.

\end{thebibliography}
\end{document}